
\documentclass{amsart}%
\usepackage{amsmath}
\usepackage{amssymb}
\usepackage{amsfonts}%
\setcounter{MaxMatrixCols}{30}%
\usepackage{graphicx}
\providecommand{\U}[1]{\protect \rule{.1in}{.1in}}
\newtheorem{theorem}{Theorem}
\theoremstyle{plain}

\newtheorem{definition}{Definition}

\newtheorem{lemma}{Lemma}

\newtheorem{remark}{Remark}

\numberwithin{equation}{section}
\begin{document}
\title[Weighted anisotropic Morrey Spaces estimates]{Weighted anisotropic Morrey Spaces estimates for anisotropic maximal operators }
\author{F. GURBUZ}
\address{ANKARA UNIVERSITY, FACULTY OF SCIENCE, DEPARTMENT OF MATHEMATICS, TANDO\u{G}AN
06100, ANKARA, TURKEY }
\curraddr{}
\email{feritgurbuz84@hotmail.com}
\urladdr{}
\thanks{}
\thanks{}
\thanks{}
\date{}
\subjclass[2010]{ 42B20, 42B25}
\keywords{Weighted anisotropic Morrey Space{; anisotropic maximal function;} $A_{p}$ weights}
\dedicatory{ }
\begin{abstract}
The aim of this paper can give weighted anisotropic Morrey Spaces estimates
for anisotropic maximal functions.

\end{abstract}
\maketitle

\section{Introduction}

The classical Morrey spaces have been introduced by Morrey in \cite{Morrey} to
study the local behavior of solutions of second order elliptic partial
differential equations(PDEs). In recent years there has been an explosion of
interest in the study of the boundedness of operators on Morrey-type spaces.
It has been obtained that many properties of solutions to PDEs are concerned
with the boundedness of some operators on Morrey-type spaces. In fact, better
inclusion between Morrey and H\"{o}lder spaces allows to obtain higher
regularity of the solutions to different elliptic and parabolic boundary
problems. Chiarenza and Frasca \cite{Chiarenza} have obtained the boundedness
of Hardy-Littlewood maximal operator, the fractional integral operator and a
singular integral operator in the Morrey spaces. The boundedness of fractional
integral operator has been originally studied by Adams \cite{Adams}. On the
other hand, it is very important to study weighted estimates for these
operators in harmonic analysis. On the weighted $L_{p}$ spaces, the
boundedness of operators above has been obtained by Muckenhoupt
\cite{Muckenhoupt1} and Coifman and Fefferman \cite{Fefferman1}. These results
are extended to several spaces, however, weighted Morrey spaces have yet to be studied.

Thus, in this paper we shall introduce the weighted anisotropic Morrey Spaces
and investigate the boundedness of the anisotropic maximal functions on this space.

\section{Definitions and notation}

Throughout this paper all notation is standard or will be defined as needed.

Let ${\mathbb{R}^{n}}$ be the $n-$dimensional Euclidean space of points
$x=(x_{1},...,x_{n})$ with norm $|x|=\left(
{\displaystyle \sum \limits_{i=1}^{n}}
x_{i}^{2}\right)  ^{\frac{1}{2}}$, $Q=Q\left(  x_{0},r\right)  $ denotes the
cube centered at $x_{0}$ with side length $r$. Given a cube $Q$ and
$\lambda>0$, $\lambda Q$ denotes the cube with the same center as $Q$ whose
side length is $\lambda$ times that of $Q$. A weight is a locally integrable
function on ${\mathbb{R}^{n}}$ which takes values in $(0,\infty)$ almost
everywhere. For a weight function $w$ and a measurable set $E$, we define
$w(E)=%
{\displaystyle \int \limits_{E}}
w(x)dx$, the Lebesgue measure of $E$ by $|E|$ and the characteristic function
of $E$ by $\chi_{_{E}}$. Given a weight function $w$, we say that $w$
satisfies the doubling condition if there exists a constant $D>0$ such that
for any cube $Q $, we have $w(2Q)\leq Dw(Q) $. When $w$ satisfies this
condition, we denote $w\in \Delta_{2}$, for short.

If $w$ is a weight function, we denote by $L_{p}(w)\equiv L_{p}({{\mathbb{R}%
^{n}}},w)$ the weighted Lebesgue space defined by the norm
\[
\Vert f\Vert_{L_{p,w}}=\left(
{\displaystyle \int \limits_{{{\mathbb{R}^{n}}}}}
|f(x)|^{p}w(x)dx\right)  ^{\frac{1}{p}}<\infty,~~~~\text{when }1\leq p<\infty
\]
and by
\[
\Vert f\Vert_{L_{\infty,w}}=\operatorname*{esssup}\limits_{x\in{\mathbb{R}%
^{n}}}|f(x)|w(x),~~~~\text{when }p=\infty.
\]

We denote by $WL_{p}(w)$ the weighted weak space consisting of all measurable
functions $f$ such that%
\[
\Vert f\Vert_{WL_{p}(w)}=\sup \limits_{t>0}tw\left(  \left \{  x\in
{{\mathbb{R}^{n}:}}\left \vert f\left(  x\right)  \right \vert >t\right \}
\right)  ^{\frac{1}{p}}<\infty.
\]

Let $\mathbb{R}_{0}^{n}=\mathbb{R}^{n}\setminus \{0\}$ and $\mathbb{Z}$ be the
set of integer numbers. Let also $a=\left(  a_{1},...,a_{n}\right)  $ be a
fixed vector from $\mathbb{R}^{n}$ with $a_{i}>0$, $i=1,\ldots,n$. Consider a
real $n\times n$ matrix $A$ with eigenvalues $\lambda_{j}$, Re$\lambda
_{j}=a_{j}>0$ and let $Q=trA$ be its trace. The matrix $A$ determines a
one-parameter group $A_{t}=\exp(A\ln t)$, $t>0$ of nonsingular transformations
of $\mathbb{R}^{n}$. Denote by diag $\{a_{1},...,a_{n}\}$ the matrix with
numbers $a_{1},...,a_{n}$ on the main diagonal and zero off -diagonal elements
and let $a_{\max}=\max \limits_{1\leq i\leq n}a_{i}$. Associated with the group
$A_{t}$ is the $A_{t}$-homogeneous metric $\rho:\mathbb{R}_{0}^{n}\rightarrow%
\mathbb{R}
_{+}$, $\rho(A_{t}x)=t\rho(x)$ which is smooth on $\mathbb{R}_{0}^{n}$.

For $x\in \mathbb{R}_{0}^{n}$, let $\left[  x\right]  _{a}$ be a positive
solution to the equation $%
{\displaystyle \sum \limits_{i=1}^{n}}
x_{i}^{2}\left[  x\right]  _{a}^{-2a_{i}}=1$ and $\left \vert x\right \vert
_{a}=\max \limits_{1\leq i\leq n}\left \vert x_{i}\right \vert ^{\frac{1}{a_{i}}%
}$. Note that $\rho \left(  x\right)  $ is equivalent to $\left \vert
x\right \vert _{a}$, i.e.,%
\[
c_{1}\left \vert x\right \vert _{a}\leq \rho \left(  x\right)  \leq c_{2}%
\left \vert x\right \vert _{a}.
\]

For $x\in{\mathbb{R}^{n}}$ and $r>0$ we define the one-parametric
parallelepiped
\begin{align*}
E\left(  x,t\right)   & =\left \{  y\in{\mathbb{R}^{n}:}\left \vert
x-y\right \vert _{a}\leq t\right \} \\
& =\left \{  y\in{\mathbb{R}^{n}:}\left \vert y_{i}-x_{i}\right \vert \leq
t^{a_{i}},\text{ }i=1,\ldots,n\right \}
\end{align*}
and by $E=E\left(  \alpha \right)  $ we denote the set of all $E\left(
x,t\right)  $ with $x\in{\mathbb{R}^{n}}$, $t>0$. If $a_{1}=\cdots=a_{n}$,
then $E\left(  x,t\right)  $ is a cube.

All parallelepipeds are assumed to have their sides parallel to the coordinate
axes. $E=E\left(  x_{0},r\right)  $ denotes the parallelepiped centered at
$x_{0}$ with side length $r^{\alpha_{1}},\ldots,r^{\alpha_{n}} $ consequently.
Given a parallelepiped $E$ and $\lambda>0$, $\lambda^{a}E$ denotes the
parallelepiped with the same center as $E$ whose side length is $\left(
\lambda r\right)  ^{a_{1}},\ldots,\left(  \lambda r\right)  ^{a_{n}}$ consequently.

The letter $C$ is used for various constants, and may change from one
occurrence to another. First we introduce a weighted anisotropic Morrey space.

\begin{definition}
$\left(  \text{\textbf{Weighted anisotropic Morrey spaces}}\right)  $ Let
$1\leq p<\infty$, $0\leq \kappa<1$ and $w$ be a weight. Then a weighted
anisotropic Morrey space is defined by%
\[
L_{p,\kappa,a}(w):=\left \{  f\in L^{loc}(w):\left \Vert f\right \Vert
_{L_{p,\kappa,a}(w)}<\infty \right \}  ,
\]
where%
\[
\left \Vert f\right \Vert _{L_{p,\kappa,a}(w)}=\sup_{E}\left(  \frac{1}{w\left(
E\right)  ^{\kappa}}%
{\displaystyle \int \limits_{E}}
|f(x)|^{p}w(x)dx\right)  ^{\frac{1}{p}}%
\]
and the supremum is taken over all parallelepipeds $E$ on ${\mathbb{R}^{n}}$.
In the case of $a=\left(  1,\ldots,1\right)  $, we get weighted Morrey spaces
$L_{p,\kappa}(w)=L_{p,\kappa,1}(w)$.
\end{definition}

\begin{remark}
Alternatively, we could define the weighted Morrey spaces with anisotropic
balls instead of parallelepipeds. Hence we shall use these two definitions of
weighted anisotropic Morrey spaces appropriate to calculation. Also, we could
define the weighted Morrey spaces with cubes instead of parallelepipeds.
\end{remark}

\begin{remark}
$(1)~$ If $w\equiv1$ and $\kappa=\lambda/n$ with $0<\lambda<n$, then
$L_{p,\kappa}(w)=L_{p,\lambda}({\mathbb{R}^{n}})$ is the classical Morrey
spaces. If $w\equiv1$ and $\kappa=\frac{\lambda}{\left \vert a\right \vert } $
with $0<\lambda<\left \vert a\right \vert $, $\left \vert a\right \vert
=a_{1}+\cdots+a_{n}$, then $L_{p,\kappa,a}(w)=L_{p,\lambda,a}(w)$ the
anisotropic Morrey spaces.

$(2)~$ Let $w\in \Delta_{2}$. If $\kappa=0,$ then $L_{p,0}(w)=L_{p}(w)$ is the
weighted Lebesgue spaces. If $\kappa=1$, then $L_{p,1}(w)=L_{\infty}(w) $ by
the Lebesgue differentiation theorem with respect to $w$ (see \cite{Rudin}).

$(3)~$In the one-dimensional case, let a weight $w\left(  x\right)
=\left \vert x\right \vert ^{\alpha}$ for some $-\frac{1}{2}<\alpha<0$ and a
function $f\left(  x\right)  =\chi_{\left(  0,1\right)  }\left \vert
x\right \vert ^{-\frac{1}{2}}$. Then $f\in L^{1,\frac{\alpha+\frac{1}{2}%
}{\alpha+1}}\left(  w\right)  \setminus L^{2\left(  \alpha+1\right)  }\left(
w\right)  $.
\end{remark}

Let $f\in L^{loc}\left(  {\mathbb{R}^{n}}\right)  $. The anisotropic maximal
function $Mf$ and the sharp maximal function $f^{\#}$ are defined by
\[
Mf(x)=\sup_{t>0}|E(x,t)|^{-1}\int \limits_{E(x,t)}|f(y)|dy
\]
and
\[
f^{\#}\left(  x\right)  =\sup_{t>0}|E(x,t)|^{-1}\int \limits_{E(x,t)}%
|f(y)-f_{E(x,t)}|dy,
\]
where $f_{E(x,t)}=|E(x,t)|^{-1}\int \limits_{E(x,t)}|f(y)|dy$.

If $a_{1}=\cdots=a_{n}$, then $E\left(  x,t\right)  $ is a cube and $Mf$
becomes the usual Hardy-Littlewood maximal function. For $r>0$, we denote
$M_{r}f(x)$ by $\left(  M\left \vert f\right \vert ^{r}\left(  x\right)
\right)  ^{\frac{1}{r}}$.

Let $w$ be a weight. $M_{w}$ denotes the anisotropic maximal operator with
respect to the measure $w\left(  x\right)  dx$ defined by
\begin{equation}
M_{w}f(x)=\sup_{E}\frac{1}{w\left(  E\right)  }%
{\displaystyle \int \limits_{E}}
|f(y)|w\left(  y\right)  dy\label{2*}%
\end{equation}

We shall end this section by defining two weight classes.

\begin{definition}
$\left(  \text{\textbf{Muckenhoupt classes}}\right)  $ A weight function $w$
is in the Muckenhoupt's class $A_{p}\left(  {{\mathbb{R}^{n}}}\right)  $ with
$1<p<\infty$, if there exist $C>1$ such that for any parallelepiped $E$
\begin{equation}
\lbrack w]_{A_{p}\left(  E\right)  }\equiv \left(  |E|^{-1}%
{\displaystyle \int \limits_{E}}
w(x)dx\right)  \left(  |E|^{-1}%
{\displaystyle \int \limits_{E}}
w(x)^{1-p^{\prime}}dx\right)  ^{p-1}\leq C\label{2}%
\end{equation}
where $\frac{1}{p}+\frac{1}{p^{\prime}}=1$ and the infimum of $C$ satisfying
the following inequality (\ref{2}) is denoted by $[w]_{A_{p}}$, and also for
$p=\infty$ we define $A_{\infty}=%
{\displaystyle \bigcup \limits_{1\leq p<\infty}}
A_{p}$, $[w]_{A_{\infty}}=\inf \limits_{1\leq p<\infty}[w]_{A_{p}} $ and
$[w]_{A_{\infty}}\leq \lbrack w]_{A_{p}}$.

When $p=1$, $w\in A_{1}$ if there exist $C>1$ such that for almost every $x$,%
\begin{equation}
Mw(x)\leq Cw(x)\label{1}%
\end{equation}
and the infimum of $C$ satisfying the inequality (\ref{1}) is denoted by
$[w]_{A_{1}}$.
\end{definition}

It is easy to verify that, $\rho \left(  x\right)  ^{\alpha}\in A_{p}$ if and
only if $-\left \vert a\right \vert <\alpha<\left \vert a\right \vert \left(
p-1\right)  $ for $1<p<\infty$ and $\rho \left(  x\right)  ^{\alpha}\in A_{1}$
if and only if $-\left \vert a\right \vert <\alpha \leq0$.

\section{Lemmas and well known results}

In this section, we shall prove some lemmas and describe the well-known result
about the weighted $L_{p}$ spaces.

\begin{theorem}
\label{Theorem1}$\left(  \left[  \text{\cite{HMW}, Theorem 2.6, p.
146}\right]  \right)  $ If $1<p<\infty$ and $w\in \Delta_{2}$, then the
operator $M_{w}$ is bounded on $L_{p}(w)$.
\end{theorem}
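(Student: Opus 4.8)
The plan is to recognize $M_w$ as the anisotropic Hardy--Littlewood maximal operator associated with the measure $d\mu=w(x)\,dx$, and then to run the classical Calder\'on--Zygmund scheme: prove a weak-type $(1,1)$ bound with respect to $\mu$, note the trivial $L_\infty$ bound, and interpolate. Writing $\|f\|_{L_{p,w}}=\|f\|_{L_p(\mu)}$, the desired estimate $\|M_w f\|_{L_{p,w}}\le C\|f\|_{L_{p,w}}$ for $1<p<\infty$ will then follow from the Marcinkiewicz interpolation theorem on the measure space $(\mathbb{R}^n,\mu)$.

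First I would establish
\[
\mu\bigl(\{x:M_w f(x)>\lambda\}\bigr)\le \frac{C}{\lambda}\int_{\mathbb{R}^n}|f(y)|w(y)\,dy,\qquad \lambda>0,
\]
i.e.\ $\|M_w f\|_{WL_1(w)}\le C\|f\|_{L_{1,w}}$. Fix a compact subset $K$ of the level set; for each $x\in K$ choose a parallelepiped $E_x\ni x$ with $\frac{1}{w(E_x)}\int_{E_x}|f|w>\lambda$, so that $w(E_x)<\frac{1}{\lambda}\int_{E_x}|f|w$. The engulfing property of the family $\{E(x,t)\}$ --- if two such parallelepipeds meet and one has the smaller parameter $t$, then it is contained in a fixed anisotropic dilate $c^{a}E$ of the larger one --- lets me extract, by a Vitali-type argument, a finite pairwise disjoint subfamily $\{E_{x_1},\dots,E_{x_N}\}$ whose dilates $c^{a}E_{x_j}$ cover $K$.

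The doubling hypothesis $w\in\Delta_2$ then enters decisively: iterating $w(2E)\le Dw(E)$ controls the enlargement, giving $w(c^{a}E_{x_j})\le C_0\,w(E_{x_j})$ with $C_0$ depending only on $D$ and $a$. Summing over the disjoint family yields
\[
\mu(K)\le\sum_{j=1}^{N}w\bigl(c^{a}E_{x_j}\bigr)\le C_0\sum_{j=1}^{N}w(E_{x_j})\le\frac{C_0}{\lambda}\sum_{j=1}^{N}\int_{E_{x_j}}|f|w\le\frac{C_0}{\lambda}\int_{\mathbb{R}^n}|f|w,
\]
where the last step uses the pairwise disjointness; taking the supremum over all such compact $K$ gives the weak-type bound. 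The endpoint estimate $\|M_w f\|_{L_\infty}\le\|f\|_{L_\infty}$ is immediate, since each weighted average is dominated by the essential supremum of $|f|$. Interpolating between these two estimates on $(\mathbb{R}^n,\mu)$ completes the proof.

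I expect the main obstacle to be the covering step: one must verify that the anisotropic parallelepipeds $E(x,t)$, which scale through the group $A_t$ rather than by ordinary homothety, still satisfy a Vitali/engulfing property, and that the $\Delta_2$ condition --- stated for cubes --- transfers to the anisotropic dilates $c^{a}E$ actually used above. Once these geometric facts are in place, the remaining analytic part is the standard Marcinkiewicz argument.
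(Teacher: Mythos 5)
Your argument is correct, but there is nothing in the paper to compare it to: Theorem \ref{Theorem1} is not proved there at all --- it is imported by citation from \cite{HMW} (Theorem 2.6, p.~146), and the paper's own results (Theorems \ref{Theorem2} and \ref{Theorem3}) use it as a black box. What you have reconstructed is the classical doubling-measure maximal theorem (Vitali covering for the weak type $(1,1)$ bound, trivial $L_{\infty}$ bound, Marcinkiewicz interpolation on $(\mathbb{R}^{n},w\,dx)$), and both of the obstacles you flag at the end can be discharged. First, the engulfing property does hold: the sets $E(x,t)$ are precisely the balls of the quasi-metric $\left\vert x-y\right\vert _{a}$, and if $E(y,s)\cap E(x,t)\neq\emptyset$ with $s\leq t$, then for any $u\in E(y,s)$ and any intersection point $z$ one has
\[
\left\vert u_{i}-x_{i}\right\vert \leq\left\vert u_{i}-y_{i}\right\vert
+\left\vert y_{i}-z_{i}\right\vert +\left\vert z_{i}-x_{i}\right\vert
\leq2s^{a_{i}}+t^{a_{i}}\leq3t^{a_{i}}\leq\left(  ct\right)  ^{a_{i}},
\]
provided $c=3^{1/\min_{i}a_{i}}$, so $E(y,s)\subset c^{a}E(x,t)$; iterating the doubling inequality $\lceil\log_{2}c\rceil$ times then gives $w(c^{a}E)\leq C_{0}w(E)$ with $C_{0}$ depending only on $D$ and $a$. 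Second, your worry about $\Delta_{2}$ being stated for cubes points at a genuine defect of the paper rather than of your proof: cube-doubling does not formally imply doubling along the anisotropic dilates $\lambda^{a}E$, but the paper itself silently uses the latter (e.g.\ the step $w(5R)\leq D^{3}w(R)$ for parallelepipeds in the proof of Lemma \ref{Lemma3}), so the hypothesis $w\in\Delta_{2}$ must be read throughout as doubling for the family of parallelepipeds $E(x,t)$; under that reading your covering step, and hence the whole interpolation argument, closes exactly as you outline. The net effect is that your proposal supplies a complete, self-contained proof, valid in the anisotropic setting, of a statement the paper only cites.
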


The next lemma plays an important role in our proofs of theorems. We say that
$w$ satisfies the reverse doubling condition if $w$ has the property (\ref{3})
of the following lemma.

\begin{lemma}
\label{Lemma3}If $w\in \Delta_{2}$, then there exists a constant $D_{1}>1$ such
that%
\begin{equation}
w\left(  2E\right)  \geq D_{1}w\left(  E\right)  .\label{3}%
\end{equation}

\end{lemma}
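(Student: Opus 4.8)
The plan is to derive the reverse doubling inequality (\ref{3}) from the forward doubling property $w(2E)\le D\,w(E)$ by a pigeonhole argument: if a single doubling could raise $w$ by only a negligible factor, then a fixed sub-parallelepiped sitting in the annular region $2E\setminus E$ would be forced to carry a negligible share of $w(2E)$, contradicting the doubling bound. Concretely, it suffices to produce a constant $\theta\in(0,1)$, independent of $E$, with $w(E)\le \theta\,w(2E)$; then (\ref{3}) holds with $D_{1}=1/\theta>1$.

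First I would fix $E=E(x_{0},r)$ and exhibit an auxiliary parallelepiped $\tilde{E}=E(z,c_{0}r)$ from the same family that lies inside the shell $2E\setminus E$. Using the description $E(x,t)=\{y:|y_{i}-x_{i}|\le t^{a_{i}}\}$, I place the center by shifting $x_{0}$ in the first coordinate to the midpoint of the outer shell, $z_{1}=x_{0,1}+\tfrac{1+2^{a_{1}}}{2}r^{a_{1}}$ and $z_{i}=x_{0,i}$ for $i\ge 2$, and choose the scale $c_{0}=c_{0}(a_{1})\le 2$ small enough that the first-coordinate extent of $\tilde{E}$ fits inside the slab $\{x_{0,1}+r^{a_{1}}\le y_{1}\le x_{0,1}+2^{a_{1}}r^{a_{1}}\}$. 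By construction every point of $\tilde{E}$ satisfies $y_{1}>x_{0,1}+r^{a_{1}}$, so $\tilde{E}$ is disjoint from $E$ while remaining inside $2E$; hence $w(E)+w(\tilde{E})\le w(2E)$.

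Next I would bound $w(\tilde{E})$ below by a fixed fraction of $w(2E)$. Since $\tilde{E}$ has scale comparable to $r$ and its center differs from $x_{0}$ by at most a fixed multiple of $r^{a_{i}}$ in each coordinate, there is an integer $k=k(a)$, depending only on the anisotropy vector, such that the $k$-fold concentric dilate of $\tilde{E}$ (namely $E(z,2^{k}c_{0}r)$) contains $2E$; this is a routine coordinate-by-coordinate inclusion check. Iterating the doubling hypothesis $k$ times gives $w(2E)\le w\bigl(E(z,2^{k}c_{0}r)\bigr)\le D^{k}\,w(\tilde{E})$, so $w(\tilde{E})\ge D^{-k}\,w(2E)$. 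Combined with the disjointness estimate this yields $w(E)\le w(2E)-w(\tilde{E})\le (1-D^{-k})\,w(2E)$, and $\theta=1-D^{-k}$, $D_{1}=(1-D^{-k})^{-1}$ complete the argument.

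The only genuinely delicate point is the anisotropic geometry in the two preceding steps: unlike the isotropic case, dilating by a factor $\lambda$ rescales the $i$-th side by $\lambda^{a_{i}}$, so both the fitting of $\tilde{E}$ inside the shell and the covering $2E\subseteq E(z,2^{k}c_{0}r)$ must be verified per coordinate with constants that depend on $a$ but not on $E$. I expect this uniform bookkeeping—choosing $c_{0}$ and $k$ independently of $x_{0}$ and $r$—to be the main obstacle, whereas the measure-theoretic conclusion is immediate once the geometry is secured.
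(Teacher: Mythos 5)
Your proof is correct and follows essentially the same route as the paper: both arguments place a parallelepiped of comparable scale inside $2E\setminus E$, use disjointness to get $w(E)+w(\tilde{E})\le w(2E)$, and then apply the doubling condition a bounded number of times to show that the extra piece carries a definite fraction of the mass (the paper via $E\subset 5R$ and $w(E)\le D^{3}w(R)$, giving $D_{1}=1+D^{-3}$; you via $2E\subset E(z,2^{k}c_{0}r)$ and $w(\tilde{E})\ge D^{-k}w(2E)$, giving $D_{1}=(1-D^{-k})^{-1}$). Your version is in fact more explicit about the anisotropic bookkeeping that the paper glosses over, but the underlying idea is identical.
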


\begin{proof}
Fix a parallelepiped $E=E\left(  x_{0},r\right)  $. Then we can choose a
parallelepiped $R\subset2E$ with side length $\frac{r}{2}$ which is disjoint
from the parallelepiped $E$. Hence%
\[
w\left(  E\right)  +w\left(  R\right)  \leq w\left(  2E\right)  .
\]
On the other hand, since $E\subset5R$ we have $w\left(  E\right)  \leq
w\left(  5R\right)  \leq D^{3}w\left(  R\right)  $, where $D$ is a doubling
constant. Therefore we have%
\[
w\left(  E\right)  +\frac{w\left(  E\right)  }{D^{3}}\leq w\left(  2E\right)
.
\]

\end{proof}

\begin{lemma}
\label{Lemma2}(\cite{Kokilashvili}) The following statements hold:

$(1)~$ If $w\in A_{p}$ for some $1\leq p<\infty$, then $w\in \Delta_{2}$.
Moreover, for all $\lambda>1$ we have
\[
w(\lambda E)\leq \lambda^{np}[w]_{A_{p}}w(E).
\]

$(2)~$ Let $w\in A_{p}$ for some $1\leq p<\infty$. Then we have
\[
Mf(x)\leq \lbrack w]_{A_{p}}^{\frac{1}{p}}\left(  M_{w}\left(  \left \vert
f\right \vert ^{p}\right)  \left(  x\right)  \right)  ^{\frac{1}{p}}.
\]

\end{lemma}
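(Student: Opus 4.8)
The plan is to derive both parts from a single rearrangement of the defining inequality (\ref{2}). Assume first $1<p<\infty$. Writing out the powers of $|E|$ in (\ref{2}) gives $|E|^{-p}\left(\int_E w\right)\left(\int_E w^{1-p'}\right)^{p-1}\le[w]_{A_p}$, and since $p'(p-1)=p$, taking the $\tfrac{1}{p'(p-1)}=\tfrac1p$ power yields the key estimate
\[
\left(\int_E w^{1-p'}\right)^{1/p'}\le[w]_{A_p}^{1/p}\,|E|\,w(E)^{-1/p}.
\]
Everything below is an application of this inequality together with H\"older's inequality with exponents $p$ and $p'$; the only genuinely separate work is the endpoint $p=1$, where (\ref{2}) must be replaced by the $\operatorname*{essinf}$ form of (\ref{1}).

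For part (1), I would first prove the measure comparison: for any measurable $S\subset E$, H\"older gives $|S|=\int_S w^{1/p}w^{-1/p}\le\left(\int_S w\right)^{1/p}\left(\int_E w^{1-p'}\right)^{1/p'}$ (using $w^{-p'/p}=w^{1-p'}$ and enlarging the second integral to $E$), and inserting the key estimate collapses this to
\[
\frac{|S|}{|E|}\le[w]_{A_p}^{1/p}\left(\frac{w(S)}{w(E)}\right)^{1/p}.
\]
Specializing to $S=E$ with the ambient parallelepiped replaced by $\lambda E$, and using $|\lambda E|=\lambda^{n}|E|$, gives $\lambda^{-n}\le[w]_{A_p}^{1/p}\left(w(E)/w(\lambda E)\right)^{1/p}$, which after raising to the $p$-th power is exactly $w(\lambda E)\le\lambda^{np}[w]_{A_p}w(E)$; taking $\lambda=2$ shows $w\in\Delta_2$. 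For $p=1$ the same comparison follows from $w(E)/|E|\le[w]_{A_1}\operatorname*{essinf}_E w$ and $w(S)\ge|S|\operatorname*{essinf}_E w$.

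For part (2), fix a parallelepiped $E=E(x,t)$ centered at $x$ and split $|f|=\left(|f|w^{1/p}\right)w^{-1/p}$. H\"older gives $\frac{1}{|E|}\int_E|f|\le\frac{1}{|E|}\left(\int_E|f|^p w\right)^{1/p}\left(\int_E w^{1-p'}\right)^{1/p'}$. Bounding $\int_E|f|^p w\le w(E)\,M_w(|f|^p)(x)$ by the very definition (\ref{2*}) of $M_w$ and inserting the key estimate, the factors of $|E|$ and $w(E)$ cancel and leave $\frac{1}{|E|}\int_E|f|\le[w]_{A_p}^{1/p}\left(M_w(|f|^p)(x)\right)^{1/p}$. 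Taking the supremum over $t>0$ yields the claim. Again the endpoint $p=1$ is handled separately: using $\operatorname*{essinf}_E w\ge w(E)/([w]_{A_1}|E|)$ one estimates $\frac{1}{|E|}\int_E|f|\le\frac{1}{|E|\operatorname*{essinf}_E w}\int_E|f|w\le[w]_{A_1}\frac{1}{w(E)}\int_E|f|w$.

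The computations are all routine; the one point demanding care is pinning down the exponents in the key estimate (the identities $p'/p=p'-1$ and $p'(p-1)=p$), since an error there propagates the wrong power of $[w]_{A_p}$ and $|E|$ into both parts. The other thing to watch is that the volume scaling used in part (1) is $|\lambda E|=\lambda^{n}|E|$ for the cube normalization stated in the lemma (for the genuinely anisotropic dilation $\lambda^a E$ the exponent would be $|a|$ rather than $n$), so the statement should be read with the volume convention of the ambient dilation fixed accordingly.
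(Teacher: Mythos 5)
Your proposal is correct and takes essentially the same route as the paper: both parts rest on H\"older's inequality combined with a rearrangement of the $A_{p}$ condition, and your ``key estimate'' $\left(\int_{E}w^{1-p^{\prime}}\right)^{1/p^{\prime}}\leq[w]_{A_{p}}^{1/p}|E|\,w(E)^{-1/p}$ is exactly the recombination the paper carries out inside its displayed computations (for part (1) the paper writes the ratio $w(\lambda E)/w(E)$ as a product of factors and bounds each, which uses the same ingredients as your subset-comparison argument: the $A_{p}$ inequality on $\lambda E$, monotonicity of $\int w^{1-p^{\prime}}$, and H\"older on $E$ in the guise $[w]_{A_{p}(E)}\geq1$). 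The only substantive difference is that you handle the endpoint $p=1$ explicitly via the $\operatorname*{essinf}$ form of $A_{1}$ --- a degenerate case the paper's written proof glosses over, since its use of $w^{1-p^{\prime}}$ and the exponent pair $(p,p^{\prime})$ is vacuous there --- and you correctly flag the volume convention $|\lambda E|=\lambda^{n}|E|$ needed for the exponent $np$; both are welcome refinements but not a different method.
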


\begin{proof}
$\left(  1\right)  $ Let $w\in A_{p}$ for some $1\leq p<\infty$ and
$\lambda>1$. Then%
\begin{align*}
\frac{w\left(  \lambda E\right)  }{w\left(  E\right)  }  & =\left(
\frac{\left \vert \lambda E\right \vert }{\left \vert E\right \vert }\right)
^{p}\frac{[w]_{A_{p}}\left(  \lambda E\right)  }{[w]_{A_{p}}\left(  E\right)
}\frac{\left(
{\displaystyle \int \limits_{E}}
w\left(  x\right)  ^{1-p^{\prime}}dx\right)  ^{p-1}}{\left(
{\displaystyle \int \limits_{\lambda E}}
w\left(  x\right)  ^{1-p^{\prime}}dx\right)  ^{p-1}}\\
& \leq \lambda^{np}[w]_{A_{p}}.
\end{align*}

$\left(  2\right)  $ Let $w\in A_{p}$ for some $1\leq p<\infty$. Applying the
H\"{o}lder's inequality, we get
\begin{align*}
Mf(x)  & =\sup \limits_{E}\frac{1}{\left \vert E\right \vert }\int \limits_{E}%
|f(x)|dx\\
& \leq \sup \limits_{E}\frac{1}{\left \vert E\right \vert }\left(
{\displaystyle \int \limits_{E}}
\left \vert f\left(  x\right)  \right \vert ^{p}w(x)dx\right)  ^{\frac{1}{p}%
}\left(
{\displaystyle \int \limits_{E}}
w(x)^{1-p^{\prime}}dx\right)  ^{\frac{1}{p^{\prime}}}\\
& =\sup \limits_{E}\left(  \frac{1}{w\left(  E\right)  }%
{\displaystyle \int \limits_{E}}
\left \vert f\left(  x\right)  \right \vert ^{p}w(x)dx\right)  ^{\frac{1}{p}%
}\left(
{\displaystyle \int \limits_{E}}
w(x)^{1-p^{\prime}}dx\right)  ^{\frac{1}{p^{\prime}}}\left(  \frac{w\left(
E\right)  }{\left \vert E\right \vert }\right)  ^{\frac{1}{p}}\\
& =\left(  \sup \limits_{E}\frac{1}{w\left(  E\right)  }%
{\displaystyle \int \limits_{E}}
\left \vert f\left(  x\right)  \right \vert ^{p}w(x)dx\left(  \frac
{1}{\left \vert E\right \vert }%
{\displaystyle \int \limits_{E}}
w(x)dx\right)  \left(
{\displaystyle \int \limits_{E}}
w(x)^{1-p^{\prime}}dx\right)  ^{p-1}\right)  ^{\frac{1}{p}}\\
& \leq[w]_{A_{p}}^{\frac{1}{p}}\left(  M_{w}\left(  \left \vert f\right \vert
^{p}\right)  \left(  x\right)  \right)  ^{\frac{1}{p}}.
\end{align*}

\end{proof}

\section{Anisotropic maximal function}

In this section, we shall state the boundedness of the anisotropic maximal
operators on weighted anisotropic Morrey Spaces.

\begin{theorem}
\label{Theorem2}(Our main result) If $1<p<\infty$, $0<\kappa<1$ and
$w\in \Delta_{2}$, then the operator $M_{w}$ is bounded on $L_{p,\kappa,a}(w)$.
\end{theorem}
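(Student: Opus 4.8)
The plan is to fix an arbitrary parallelepiped $E = E(x_{0},r)$ and bound
\[
\frac{1}{w(E)^{\kappa}}\int_{E}(M_{w}f)(x)^{p}\,w(x)\,dx
\]
by $C\,\|f\|_{L_{p,\kappa,a}(w)}^{p}$ with $C$ independent of $E$; taking the supremum over all $E$ then gives the theorem. Following the classical Morrey-space scheme, I would split $f = f_{1}+f_{2}$ with $f_{1}=f\chi_{2E}$ and $f_{2}=f\chi_{\mathbb{R}^{n}\setminus 2E}$, where $2E=E(x_{0},2r)$ is the anisotropic dilate and I write $2^{k}E:=E(x_{0},2^{k}r)$. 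Since $M_{w}$ is sublinear and monotone, $M_{w}f\le M_{w}f_{1}+M_{w}f_{2}$, so it suffices to estimate the two contributions separately.

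For the local part I would invoke Theorem~\ref{Theorem1}: as $1<p<\infty$ and $w\in\Delta_{2}$, the operator $M_{w}$ is bounded on $L_{p}(w)$, whence
\[
\int_{E}(M_{w}f_{1})^{p}w\le\int_{\mathbb{R}^{n}}(M_{w}f_{1})^{p}w\le C\int_{\mathbb{R}^{n}}|f_{1}|^{p}w=C\int_{2E}|f|^{p}w\le C\,w(2E)^{\kappa}\|f\|_{L_{p,\kappa,a}(w)}^{p},
\]
the last inequality being just the definition of the Morrey norm. Dividing by $w(E)^{\kappa}$ and using the doubling bound $w(2E)\le Dw(E)$ controls this term by $CD^{\kappa}\|f\|_{L_{p,\kappa,a}(w)}^{p}$. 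This part is routine.

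The global part is where the real work lies. For $x\in E$, only parallelepipeds $E(x,t)$ with $t$ comparable to or larger than $r$ can meet $\operatorname{supp}f_{2}\subseteq\mathbb{R}^{n}\setminus 2E$, which follows from the quasi-triangle inequality for $|\cdot|_{a}$ together with $|x-x_{0}|_{a}\le r$. For such $t$ I would establish, again via the quasi-metric geometry, a two-sided containment $2^{k-c_{0}}E\subseteq E(x,t)\subseteq 2^{k+c_{0}}E$ for the index $k$ with $2^{k}r\le t<2^{k+1}r$, so that the doubling condition makes $w(E(x,t))$ comparable to $w(2^{k}E)$ and reduces matters to
\[
M_{w}f_{2}(x)\le C\sup_{k\ge1}\frac{1}{w(2^{k}E)}\int_{2^{k}E}|f|\,w.
\]
Jensen's inequality with respect to the probability measure $w\,dx/w(2^{k}E)$ turns each $L^{1}$-average into an $L^{p}$-average, and the definition of the Morrey norm bounds it by $w(2^{k}E)^{(\kappa-1)/p}\|f\|_{L_{p,\kappa,a}(w)}$. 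Here $\kappa<1$ makes the exponent negative, and the reverse doubling inequality $w(2^{k}E)\ge D_{1}^{k}w(E)$ of Lemma~\ref{Lemma3} forces these factors to be dominated by a decaying geometric sequence, so the supremum over $k\ge1$ is controlled by the $k=1$ term $C\,w(E)^{(\kappa-1)/p}\|f\|_{L_{p,\kappa,a}(w)}$, a bound uniform in $x\in E$. Integrating this pointwise estimate over $E$ gives $\int_{E}(M_{w}f_{2})^{p}w\le C\,w(E)^{\kappa-1}\|f\|_{L_{p,\kappa,a}(w)}^{p}\,w(E)=C\,w(E)^{\kappa}\|f\|_{L_{p,\kappa,a}(w)}^{p}$, and dividing by $w(E)^{\kappa}$ finishes the estimate. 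The main obstacle, and the step demanding care, is precisely this global estimate: translating the centered anisotropic averages over $E(x,t)$ into the dilates $2^{k}E$ centered at $x_{0}$ through the quasi-triangle inequality and the doubling and reverse-doubling properties, where the hypothesis $\kappa<1$ is indispensable for the summability.
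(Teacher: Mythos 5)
Your overall scheme is the same as the paper's: decompose $f=f_{1}+f_{2}$ with $f_{1}$ supported on a fixed dilate of $E$, dispose of the local part with Theorem~\ref{Theorem1} and doubling, and for the far part convert each admissible average into an average over a parallelepiped containing $E$, apply H\"older (your Jensen step is the same inequality), and use that the exponent $\frac{\kappa-1}{p}$ is negative. The local part and the final integration over $E$ are correct as written.

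There is, however, a genuine flaw in your geometric reduction, precisely at the step you identify as the crux. The lower containment $2^{k-c_{0}}E\subseteq E(x,t)$ cannot hold at the bottom scales: every $x_{0}$-centered dilate of $E$ contains $x_{0}$, while $E(x,t)$ need not. If $x$ is a corner of $E$ (so $|x_{i}-x_{0,i}|=r^{a_{i}}$ for all $i$) and $t<r$, then $x_{0}\notin E(x,t)$, no matter how large $c_{0}$ is. And such parallelepipeds do meet $\operatorname{supp}f_{2}$ in the anisotropic case: $E(x,t)$ exits $2E$ in the $i$-th coordinate as soon as $t^{a_{i}}+r^{a_{i}}>(2r)^{a_{i}}$, i.e. $t>(2^{a_{i}}-1)^{1/a_{i}}r$, and when $a_{i}<1$ this threshold is strictly below $r$ (for $a_{i}=\frac{1}{2}$ it is about $0.17\,r$). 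So indices $k\le 0$ genuinely occur (even in the isotropic case $k=0$ occurs with your cutoff $2E$), and your supremum over $k\ge 1$ does not dominate $M_{w}f_{2}(x)$ by the argument you give. The reduction you display is in fact true, but at these scales it must be proved by comparing measures through doubling centered at $x$, not through $x_{0}$-centered containments: for admissible $t\le Cr$ one has $E(x,t)\subseteq 2^{c_{0}}E\subseteq E(x,C_{1}r)$ and $w(E(x,C_{1}r))\le C'w(E(x,t))$ after finitely many doublings, whence $\frac{1}{w(E(x,t))}\int_{E(x,t)}|f_{2}|w\le\frac{C''}{w(2^{c_{0}}E)}\int_{2^{c_{0}}E}|f|w$. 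This is exactly how the paper argues: its comparison parallelepiped $R$ is a dilate of the competing parallelepiped $\tilde{E}$ about $\tilde{E}$'s \emph{own} center, blown up until $E\subset 3R$, so no lower containment of an $x_{0}$-centered set is ever needed. (Alternatively, you could take $f_{1}=f\chi_{2^{m}E}$ with $m=m(a)$ large, which forces all admissible $t$ into the range where your two-sided containment does hold.) A separate, minor point: reverse doubling (Lemma~\ref{Lemma3}) is superfluous in your argument --- you are bounding a supremum, not summing a series, so monotonicity $w(2^{k}E)\ge w(E)$ together with the negative exponent already gives $w(2^{k}E)^{(\kappa-1)/p}\le w(E)^{(\kappa-1)/p}$; geometric decay is only needed where an actual series appears, as in the paper's proof of Theorem~\ref{Theorem3} for $p=1$.
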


\begin{proof}
Fix a parallelepiped $E\subset{\mathbb{R}^{n}}$. We decompose $f=f_{1}+f_{2}
$, where $f_{1}=f\chi_{3E}$. Since $M_{w}$ is a sublinear operator, we have%
\begin{align*}
& \left(
{\displaystyle \int \limits_{E}}
M_{w}f\left(  x\right)  ^{p}w(x)dx\right)  ^{\frac{1}{p}}\\
& \leq \left(
{\displaystyle \int \limits_{E}}
M_{w}f_{1}\left(  x\right)  ^{p}w(x)dx\right)  ^{\frac{1}{p}}+\left(
{\displaystyle \int \limits_{E}}
M_{w}f_{2}\left(  x\right)  ^{p}w(x)dx\right)  ^{\frac{1}{p}}\\
& =I+II.
\end{align*}

For the term $I$, since $M_{w}$ is bounded on $L_{p}(w)$ (see Theorem
\ref{Theorem1}), we obtain%
\begin{align*}
I  & \leq \left(
{\displaystyle \int \limits_{E}}
M_{w}f_{1}\left(  x\right)  ^{p}w(x)dx\right)  ^{\frac{1}{p}}\\
& \leq C\left(
{\displaystyle \int \limits_{3E}}
|f(x)|^{p}w(x)dx\right)  ^{\frac{1}{p}}\\
& \leq C\left \Vert f\right \Vert _{L_{p,\kappa,a}(w)}w\left(  E\right)
^{\frac{\kappa}{p}}.
\end{align*}

Next we estimate the term $II$. By simple geometric observation, we have for
any $x\in E$, note that for all $\tilde{E}$ such that $x\in \tilde{E}$,
$\tilde{E}\cap \left(  3E\right)  ^{c}\neq \emptyset$ there exists $R$ such that
$E\subset3R$ and $\frac{1}{3}R\subset \tilde{E}\subset R$.

Then $w\left(  \tilde{E}\right)  \geq w\left(  \frac{1}{3}R\right)  \geq
\frac{1}{D_{1}}w\left(  R\right)  $ and%
\[
M_{w}f_{2}(x)\leq D_{1}\sup_{R:E\subset3R}\frac{1}{w\left(  R\right)  }%
{\displaystyle \int \limits_{R}}
|f(y)|w\left(  y\right)  dy.
\]

Note that%
\begin{align*}
& \frac{1}{w\left(  R\right)  }%
{\displaystyle \int \limits_{R}}
|f(y)|w\left(  y\right)  dy\\
& \leq \frac{1}{w\left(  R\right)  }\left(
{\displaystyle \int \limits_{R}}
\left \vert f\left(  y\right)  \right \vert ^{p}w(y)dy\right)  ^{\frac{1}{p}%
}\left(
{\displaystyle \int \limits_{R}}
w(y)dy\right)  ^{\frac{1}{p^{\prime}}}\\
& \leq \frac{1}{w\left(  R\right)  ^{1-\frac{1}{p^{\prime}}}}\left(
{\displaystyle \int \limits_{R}}
\left \vert f\left(  y\right)  \right \vert ^{p}w(y)dy\right)  ^{\frac{1}{p}}\\
& =\frac{1}{w\left(  R\right)  ^{\frac{1}{p}}}\left(  \frac{1}{w\left(
R\right)  ^{\kappa}}%
{\displaystyle \int \limits_{R}}
\left \vert f\left(  y\right)  \right \vert ^{p}w(y)dy\right)  ^{\frac{1}{p}%
}w\left(  R\right)  ^{\frac{\kappa}{p}}\\
& \leq \left(  \frac{1}{w\left(  R\right)  ^{\kappa}}%
{\displaystyle \int \limits_{R}}
\left \vert f\left(  y\right)  \right \vert ^{p}w(y)dy\right)  ^{\frac{1}{p}%
}w\left(  R\right)  ^{\frac{\kappa-1}{p}}\\
& \leq C\left \Vert f\right \Vert _{L_{p,\kappa,a}(w)}w\left(  R\right)
^{\frac{\kappa-1}{p}},
\end{align*}
if $E\subset3R$. So we obtain%
\begin{align*}
II  & =\left(
{\displaystyle \int \limits_{E}}
M_{w}f_{2}\left(  x\right)  ^{p}w(x)dx\right)  ^{\frac{1}{p}}\\
& \leq \left(
{\displaystyle \int \limits_{E}}
\left[  \sup \limits_{x\in R}\frac{1}{w\left(  R\right)  }%
{\displaystyle \int \limits_{R}}
|f(y)|w\left(  y\right)  dy\right]  ^{p}w(x)dx\right)  ^{\frac{1}{p}}\\
& \leq \left(
{\displaystyle \int \limits_{E}}
\left[  c\left \Vert f\right \Vert _{L_{p,\kappa,a}(w)}w\left(  E\right)
^{\frac{\kappa-1}{p}}\right]  ^{p}w(x)dx\right)  ^{\frac{1}{p}}\\
& \leq c\left \Vert f\right \Vert _{L_{p,\kappa,a}(w)}w\left(  E\right)
^{\frac{\kappa-1}{p}}\left(
{\displaystyle \int \limits_{E}}
w(x)dx\right)  ^{\frac{1}{p}}\\
& =c\left \Vert f\right \Vert _{L_{p,\kappa,a}(w)}w\left(  E\right)
^{\frac{\kappa}{p}}.
\end{align*}
Therefore%
\[
II\leq C\left \Vert f\right \Vert _{L_{p,\kappa,a}(w)}w\left(  E\right)
^{\frac{\kappa}{p}}.
\]
This completes the proof.
\end{proof}

\begin{theorem}
\label{Theorem3}(Our main result) If $1<p<\infty$, $0<\kappa<1$ and $w\in
A_{p}$, then the anisotropic maximal operator $M$ is bounded on $L_{p,\kappa
,a}(w)$.

If $p=1$, $0<\kappa<1$ and $w\in A_{1}$, for all $t>0$ and any parallelepiped
$E$,%
\[
w\left(  x\in E:Mf\left(  x\right)  >t\right)  \leq \frac{C}{t}\left \Vert
f\right \Vert _{L_{1,\kappa,a}(w)}w\left(  E\right)  ^{\kappa}.
\]

\end{theorem}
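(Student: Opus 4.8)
The plan is to mirror the proof of Theorem \ref{Theorem2}: fix a parallelepiped $E=E(x_{0},r)$, split $f=f_{1}+f_{2}$ with $f_{1}=f\chi_{3E}$ and $f_{2}=f\chi_{(3E)^{c}}$, and control the local and tail contributions separately. Throughout I will use that $w\in A_{p}$ (resp. $w\in A_{1}$) implies $w\in\Delta_{2}$, together with the quantitative doubling $w(3E)\le C\,w(E)$ from Lemma \ref{Lemma2}(1), and that the Lebesgue measure scales anisotropically as $|\lambda^{a}E|=\lambda^{|a|}|E|$.

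For the first assertion ($1<p<\infty$, $w\in A_{p}$) I estimate
\[
\left(\frac{1}{w(E)^{\kappa}}\int_{E}Mf(x)^{p}w(x)dx\right)^{\frac{1}{p}}\le\frac{1}{w(E)^{\kappa/p}}\big(I+II\big),
\]
where $I=(\int_{E}Mf_{1}^{p}w)^{1/p}$ and $II=(\int_{E}Mf_{2}^{p}w)^{1/p}$. For $I$ I would invoke the classical Muckenhoupt theorem that $M$ is bounded on $L_{p}(w)$ whenever $w\in A_{p}$ (a well-known result), obtaining $I\le C(\int_{3E}|f|^{p}w)^{1/p}\le C\left\Vert f\right\Vert_{L_{p,\kappa,a}(w)}w(3E)^{\kappa/p}\le C\left\Vert f\right\Vert_{L_{p,\kappa,a}(w)}w(E)^{\kappa/p}$ by Lemma \ref{Lemma2}(1). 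For $II$ I use the geometric observation from the proof of Theorem \ref{Theorem2}: for $x\in E$ any $\tilde{E}\ni x$ meeting $(3E)^{c}$ contains a dilate $\tfrac{1}{3}R$ and sits inside $R$ with $E\subset3R$, so $|\tilde{E}|^{-1}\int_{\tilde{E}}|f_{2}|\le 3^{|a|}|R|^{-1}\int_{R}|f|$. Applying H\"{o}lder's inequality with exponents $p,p'$ against the weight and then the $A_{p}$ condition (\ref{2}) yields $|R|^{-1}\int_{R}|f|\le C\,w(R)^{-1/p}(\int_{R}|f|^{p}w)^{1/p}\le C\left\Vert f\right\Vert_{L_{p,\kappa,a}(w)}w(R)^{(\kappa-1)/p}$; since $E\subset3R$ gives $w(R)\ge c\,w(E)$ and $(\kappa-1)/p<0$, this bounds $Mf_{2}(x)$ by $C\left\Vert f\right\Vert_{L_{p,\kappa,a}(w)}w(E)^{(\kappa-1)/p}$ uniformly in $x\in E$. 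Integrating this constant over $E$ produces $II\le C\left\Vert f\right\Vert_{L_{p,\kappa,a}(w)}w(E)^{\kappa/p}$, the desired bound.

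For the weak-type endpoint ($p=1$, $w\in A_{1}$) I again split $Mf\le Mf_{1}+Mf_{2}$, so $\{x\in E:Mf(x)>t\}\subset\{Mf_{1}>t/2\}\cup\{x\in E:Mf_{2}>t/2\}$. On the first set I use that $w\in A_{1}$ makes $M$ of weak type $(1,1)$ with respect to $w$, giving $w(\{Mf_{1}>t/2\})\le\frac{C}{t}\int_{3E}|f|w\le\frac{C}{t}\left\Vert f\right\Vert_{L_{1,\kappa,a}(w)}w(E)^{\kappa}$ after Lemma \ref{Lemma2}(1). For the second set I run the same geometric observation, but replace the unweighted average using the $A_{1}$ inequality (\ref{1}) in the form $\operatorname*{essinf}_{R}w\ge[w]_{A_{1}}^{-1}w(R)/|R|$; this gives $|R|^{-1}\int_{R}|f|\le C\,w(R)^{-1}\int_{R}|f|w\le C\left\Vert f\right\Vert_{L_{1,\kappa,a}(w)}w(R)^{\kappa-1}$ and hence the uniform bound $Mf_{2}(x)\le A:=C\left\Vert f\right\Vert_{L_{1,\kappa,a}(w)}w(E)^{\kappa-1}$ for all $x\in E$. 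If $t/2\ge A$ the set is empty; otherwise $w(\{x\in E:Mf_{2}>t/2\})\le w(E)\le\frac{2A}{t}w(E)=\frac{C}{t}\left\Vert f\right\Vert_{L_{1,\kappa,a}(w)}w(E)^{\kappa}$. Combining the two sets finishes the estimate.

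The main obstacle I anticipate is the tail term: making the geometric covering step precise in the anisotropic setting (controlling which parallelepipeds $\tilde{E}\ni x$ genuinely see $f_{2}$, and producing the comparison parallelepiped $R$ with $E\subset3R$ and $\tfrac{1}{3}R\subset\tilde{E}\subset R$), and then transferring the unweighted average $|R|^{-1}\int_{R}|f|$ to the weighted Morrey quantity through the correct use of H\"{o}lder together with the $A_{p}$ (resp. $A_{1}$) condition. The endpoint case additionally requires the harmless but slightly delicate empty-set/trivial-measure dichotomy to convert the uniform pointwise bound on $Mf_{2}$ into the stated weak-type inequality.
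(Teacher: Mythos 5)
Your proposal is correct, but it takes a genuinely different route from the paper's in both halves. For $1<p<\infty$ the paper does not rerun the decomposition at all: it uses the self-improvement of $A_p$ (reverse H\"older) to find $r<p$ with $w\in A_r$, applies the pointwise domination $Mf\le [w]_{A_r}^{1/r}\left(M_w\left(|f|^r\right)\right)^{1/r}$ from Lemma \ref{Lemma2}(2), and then simply quotes Theorem \ref{Theorem2} for $M_w$ on $L_{\frac{p}{r},\kappa,a}(w)$; the only black-box boundedness it ever invokes is Theorem \ref{Theorem1} for $M_w$ under doubling. You instead redo the $f=f_1+f_2$ splitting directly for $M$, which forces you to import the anisotropic Muckenhoupt theorem ($M$ bounded on $L_p(w)$ for $w\in A_p$) as an external ingredient --- legitimate (it is in the cited Kokilashvili--Rakosnik reference, and is itself derivable from Lemma \ref{Lemma2}(2) plus Theorem \ref{Theorem1} by the same reverse-H\"older trick), but it is exactly the dependence the paper's argument is structured to avoid; your tail estimate via H\"older and condition (\ref{2}) is correct and amounts to Lemma \ref{Lemma2}(2) applied to $f\chi_{(3E)^c}$. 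For $p=1$ the divergence is larger: the paper applies the Fefferman--Stein inequality with $\varphi=w\chi_E$, splits the resulting integral over $3E$ and the annuli $3^{j+1}E\setminus 3^{j}E$, uses the decay $M\left(w\chi_E\right)(x)\le C|x-x_0|_a^{-|a|}w(E)$, and needs the reverse doubling constant $D_1>1$ of Lemma \ref{Lemma3} to sum the resulting series; you split $f$ instead, quote the weighted weak $(1,1)$ inequality for $w\in A_1$ (itself a two-line consequence of Fefferman--Stein together with $Mw\le Cw$), and dispatch the tail by the uniform bound $Mf_2\le C\Vert f\Vert_{L_{1,\kappa,a}(w)}w(E)^{\kappa-1}$ on $E$ plus the dichotomy on whether $t/2$ exceeds this bound. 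What each approach buys: yours is more modular and avoids both the annular summation and Lemma \ref{Lemma3} at the endpoint, at the price of assuming weighted norm inequalities for $M$ itself; the paper's is more self-contained, reducing everything to the two quoted facts (Theorem \ref{Theorem1} and Fefferman--Stein) and reusing Theorem \ref{Theorem2} wholesale in the strong-type case.
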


\begin{proof}
Let $1<p<\infty$. By the reverse H\"{o}lder inequality (see \cite{Fefferman1}%
), there exists $1<r<p$ such that $w\in A_{r}$. Hence it follows from Lemma
\ref{Lemma2} $\left(  2\right)  $ and Theorem \ref{Theorem2} that%
\begin{align*}
& \left(  \frac{1}{w\left(  E\right)  ^{\kappa}}%
{\displaystyle \int \limits_{E}}
Mf\left(  x\right)  ^{p}w(x)dx\right)  ^{\frac{1}{p}}\\
& \leq C\left(  \frac{1}{w\left(  E\right)  ^{\kappa}}%
{\displaystyle \int \limits_{E}}
M_{w}\left(  \left \vert f\right \vert ^{r}\right)  \left(  x\right)  ^{\frac
{p}{r}}w(x)dx\right)  ^{\frac{1}{p}}\\
& \leq C\left \Vert M_{w}\left(  \left \vert f\right \vert ^{r}\right)
\right \Vert _{L_{\frac{p}{r},\kappa,a}\left(  w\right)  }^{\frac{1}{r}}\\
& \leq C\left \Vert \left \vert f\right \vert ^{r}\right \Vert _{L_{\frac{p}%
{r},\kappa,a}\left(  w\right)  }^{\frac{1}{r}}\\
& =c\left[  \sup_{E}\left(  \frac{1}{w\left(  E\right)  ^{\kappa}}%
{\displaystyle \int \limits_{E}}
|f(x)|^{r\left(  \frac{p}{r}\right)  }w(x)dx\right)  ^{\frac{r}{p}}\right]
^{\frac{1}{r}}\\
& =C\left \Vert f\right \Vert _{L_{p,\kappa,a}(w)}.
\end{align*}

When $p=1$, we use the Fefferman-Stein maximal inequality%
\[%
{\displaystyle \int \limits_{\left \{  x:Mf\left(  x\right)  >t\right \}  }}
\varphi \left(  x\right)  dx\leq \frac{C}{t}%
{\displaystyle \int \limits_{\mathbb{R}^{n}}}
\left \vert f\left(  x\right)  \right \vert \left(  M_{\varphi}\right)  \left(
x\right)  dx
\]
for any functions $f$ and $\varphi \geq0$ (see \cite{Fefferman3, HMW}).

Fix a parallelepiped $E=E\left(  x_{0},r\right)  $. Put $\varphi \left(
x\right)  =w\left(  x\right)  \chi_{E}\left(  x\right)  $. Then we have
\begin{align*}
&
{\displaystyle \int \limits_{\left \{  x:Mf\left(  x\right)  >t\right \}  }}
\chi_{E}\left(  x\right)  w\left(  x\right)  dx\\
& \leq \frac{C}{t}%
{\displaystyle \int \limits_{\mathbb{R}^{n}}}
\left \vert f\left(  x\right)  \right \vert M\left(  w\chi_{E}\right)  \left(
x\right)  dx\\
& =\frac{C}{t}\left(
{\displaystyle \int \limits_{3E}}
+%
{\displaystyle \int \limits_{\left(  3E\right)  ^{c}}}
\right)  =\frac{C}{t}\left \{  I+II\right \}
\end{align*}
for all $t>0$.

We now estimate the term $I$. Since $w\in A_{1}$, it follows that%
\[
M\left(  w\chi_{E}\right)  \left(  x\right)  \leq M\left(  w\right)  \left(
x\right)  \leq Cw\left(  x\right)  .
\]
So it follows that%
\[
I\leq Cw\left(  3E\right)  ^{\kappa}\left \Vert f\right \Vert _{L_{1,\kappa
,a}(w)}\leq Cw\left(  E\right)  ^{\kappa}\left \Vert f\right \Vert
_{L_{1,\kappa,a}(w)}.
\]

To estimate the term $II$, we consider the form%
\[
\frac{1}{\left \vert R\right \vert }%
{\displaystyle \int \limits_{R\cap E}}
w\left(  y\right)  dy
\]
for $x\in \left(  3E\right)  ^{c}\cap R$ and $R\cap E\neq \emptyset$. By simple
geometric observation, we have%
\[
\frac{1}{\left \vert R\right \vert }%
{\displaystyle \int \limits_{R\cap E}}
w\left(  y\right)  dy\leq C_{n}\left(  \frac{1}{\left \vert x-x_{0}\right \vert
_{a}^{\left \vert a\right \vert }}%
{\displaystyle \int \limits_{E}}
w\left(  y\right)  dy\right)  \leq C_{n}\left \vert x-x_{0}\right \vert
_{a}^{-\left \vert a\right \vert }w\left(  E\right)  .
\]
Therefore we obtain%
\[
M\left(  w\chi_{E}\right)  \left(  x\right)  \leq C_{n}\left \vert
x-x_{0}\right \vert _{a}^{-\left \vert a\right \vert }w\left(  E\right)  .
\]
Since $w\in A_{1}$, we have $w\in \Delta_{2}$ by Lemma \ref{Lemma2} $\left(
1\right)  $. Using Lemma \ref{Lemma3}, we have $w\left(  3E\right)  \geq
w\left(  2E\right)  \geq D_{1}w\left(  E\right)  $ with $D_{1}>1$. Thus we can
estimate the term $II$ as follows:%
\begin{align*}
II  & \leq Cw\left(  E\right)
{\displaystyle \int \limits_{\left(  3E\right)  ^{c}}}
\frac{\left \vert f\left(  x\right)  \right \vert }{\left \vert x-x_{0}%
\right \vert _{a}^{\left \vert a\right \vert }}dx\\
& =C%
{\displaystyle \sum \limits_{j=1}^{\infty}}
{\displaystyle \int \limits_{3^{j+1}E\setminus \left(  3^{j}E\right)  }}
\frac{\left \vert f\left(  x\right)  \right \vert }{\left \vert x-x_{0}%
\right \vert _{a}^{\left \vert a\right \vert }}dx\\
& \leq Cw\left(  E\right)
{\displaystyle \sum \limits_{j=1}^{\infty}}
\frac{1}{\left \vert 3^{j}E\right \vert }%
{\displaystyle \int \limits_{3^{j+1}E}}
\left \vert f\left(  x\right)  \right \vert dx\\
& \leq Cw\left(  E\right)
{\displaystyle \sum \limits_{j=1}^{\infty}}
\frac{1}{\left \vert 3^{j}E\right \vert }\left(  \operatorname*{esssup}%
\limits_{x\in3^{j+1}E}\frac{1}{w\left(  x\right)  }\right)
{\displaystyle \int \limits_{3^{j+1}E}}
\left \vert f\left(  x\right)  \right \vert w\left(  x\right)  dx\\
& =Cw\left(  E\right)
{\displaystyle \sum \limits_{j=1}^{\infty}}
\frac{1}{\left \vert 3^{j}E\right \vert }\frac{\left \vert 3^{j+1}E\right \vert
}{w\left(  3^{j+1}E\right)  }%
{\displaystyle \int \limits_{3^{j+1}E}}
\left \vert f\left(  x\right)  \right \vert w\left(  x\right)  dx\\
& =Cw\left(  E\right)  ^{\kappa}%
{\displaystyle \sum \limits_{j=1}^{\infty}}
\frac{w\left(  E\right)  ^{1-\kappa}}{w\left(  3^{j+1}E\right)  ^{1-\kappa}%
}\frac{1}{w\left(  3^{j+1}E\right)  ^{\kappa}}%
{\displaystyle \int \limits_{3^{j+1}E}}
\left \vert f\left(  x\right)  \right \vert w\left(  x\right)  dx\\
& \leq Cw\left(  E\right)  ^{\kappa}\left \Vert f\right \Vert _{L_{1,\kappa
,a}(w)}%
{\displaystyle \sum \limits_{j=1}^{\infty}}
\frac{w\left(  E\right)  ^{1-\kappa}}{w\left(  3^{j+1}E\right)  ^{1-\kappa}}\\
& \leq Cw\left(  E\right)  ^{\kappa}\left \Vert f\right \Vert _{L_{1,\kappa
,a}(w)}.
\end{align*}

The last series converges since the reverse doubling constant is larger than
one (see Lemma \ref{Lemma3}). This completes the proof.
\end{proof}

\end{document}